\title{Ideal-adic semi-continuity of minimal log discrepancies on surfaces}
\author{Masayuki Kawakita}
\address{Research Institute for Mathematical Sciences, Kyoto University, Kyoto 606-8502, Japan}
\email{masayuki@kurims.kyoto-u.ac.jp}
\theoremstyle{plain}
\newtheorem{theorem}{Theorem}
\newtheorem{lemma}[theorem]{Lemma}
\newtheorem{conjecture}[theorem]{Conjecture}
\theoremstyle{definition}
\newtheorem{definition}[theorem]{Definition}
\theoremstyle{remark}
\newtheorem{remark}[theorem]{Remark}
\newtheorem{notation}[theorem]{Notation}
\newtheorem*{acknowledgements}{Acknowledgements}
\newcommand{\bA}{\mathbb{A}}
\newcommand{\bN}{\mathbb{N}}
\newcommand{\bQ}{\mathbb{Q}}
\newcommand{\bR}{\mathbb{R}}
\newcommand{\cD}{\mathcal{D}}
\newcommand{\cI}{\mathcal{I}}
\newcommand{\cO}{\mathcal{O}}
\newcommand{\fa}{\mathfrak{a}}
\newcommand{\fb}{\mathfrak{b}}
\newcommand{\fm}{\mathfrak{m}}
\DeclareMathOperator{\mld}{mld}
\DeclareMathOperator{\mult}{mult}
\DeclareMathOperator{\ord}{ord}
\DeclareMathOperator{\Supp}{Supp}
\begin{document}
\begin{abstract}
We prove the ideal-adic semi-continuity of minimal log discrepancies on surfaces.
\end{abstract}

\maketitle

De Fernex, Ein and Musta\c{t}\u{a} in \cite{dFEM10} after Koll\'ar in \cite{Kl08} proved the ideal-adic semi-continuity of log canonicity effectively, to obtain Shokurov's ACC conjecture \cite{S96} for log canonical thresholds on smooth varieties. Musta\c{t}\u{a} formulated this semi-continuity for minimal log discrepancies.

\begin{conjecture}[Musta\c{t}\u{a}, see \cite{K10}]\label{cnj:mld}
Let $(X,\Delta)$ be a pair, $Z$ a closed subset of $X$ and $\cI_Z$ its ideal sheaf. Let $\fa=\prod_{j=1}^k\fa_j^{r_j}$ be a formal product of ideal sheaves $\fa_j$ with positive real exponents $r_j$. Then there exists an integer $l$ such that the following holds\textup{:} if $\fb=\prod_{j=1}^k\fb_j^{r_j}$ satisfies $\fa_j+\cI_Z^l=\fb_j+\cI_Z^l$ for all $j$, then
\begin{align*}
\mld_Z(X,\Delta,\fa)=\mld_Z(X,\Delta,\fb).
\end{align*}
\end{conjecture}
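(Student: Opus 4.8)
\section*{Proof proposal}

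The plan is to reduce the equality of the two minimal log discrepancies to an equality of valuations $\ord_E(\fa_j)=\ord_E(\fb_j)$ along each divisor $E$ that governs $\mld_Z$. The mechanism is elementary: for any divisor $E$ over $X$ whose center lies in $Z$ one has $\ord_E(\cI_Z)\ge 1$, and for the truncations
\[
\ord_E(\fa_j+\cI_Z^l)=\min\bigl(\ord_E(\fa_j),\,l\,\ord_E(\cI_Z)\bigr),
\]
and likewise for $\fb_j$. Thus the hypothesis $\fa_j+\cI_Z^l=\fb_j+\cI_Z^l$ forces $\min(\ord_E(\fa_j),l\,\ord_E(\cI_Z))=\min(\ord_E(\fb_j),l\,\ord_E(\cI_Z))$, and in particular $\ord_E(\fa_j)=\ord_E(\fb_j)$ as soon as $\ord_E(\fa_j)<l\,\ord_E(\cI_Z)$. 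It therefore suffices to produce an integer $l$ so large that every divisor relevant to computing $\mld_Z(X,\Delta,\fa)$, and symmetrically $\mld_Z(X,\Delta,\fb)$, satisfies this strict inequality for each $j$.

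Next I would isolate the relevant divisors quantitatively. Setting $m=\mld_Z(X,\Delta,\fa)$ (finite, hence $m\ge 0$, the case $m=-\infty$ being treated below), the inequality $a_E(X,\Delta,\fa)=a_E(X,\Delta)-\sum_j r_j\ord_E(\fa_j)\ge m$ together with nonnegativity of the remaining summands gives, for each $j$,
\[
r_j\,\ord_E(\fa_j)\le a_E(X,\Delta)-m.
\]
Hence $\ord_E(\fa_j)<l\,\ord_E(\cI_Z)$ would follow from a bound $a_E(X,\Delta)\le C\cdot\ord_E(\cI_Z)$ valid for every divisor $E$ computing the minimal log discrepancy over $Z$; granting a constant $C$ depending only on $X,\Delta,Z$ and the exponents $r_j$, I would take $l>C/\min_j r_j$. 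The bound must moreover hold, with the \emph{same} $C$, for the divisors computing $\mld_Z(X,\Delta,\fb)$ across all admissible $\fb$; I return to this uniformity below.

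With such an $l$ fixed, let $A$ be the set of divisors with $a_E(X,\Delta)\le C\cdot\ord_E(\cI_Z)$. For $E\in A$ we have $\ord_E(\fa_j)=\ord_E(\fb_j)$ for all $j$, so $a_E(X,\Delta,\fa)=a_E(X,\Delta,\fb)$; since $m$ is attained within $A$, taking the infimum over $A$ gives $\mld_Z(X,\Delta,\fb)\le\inf_{E\in A}a_E(X,\Delta,\fb)=m$. The reverse inequality follows by symmetry of the hypothesis in $\fa$ and $\fb$: were $\mld_Z(X,\Delta,\fb)<m$, it would be attained at some $E'\in A$, whence $m>a_{E'}(X,\Delta,\fb)=a_{E'}(X,\Delta,\fa)\ge m$, a contradiction. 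When $m=-\infty$ one instead runs a sequence of divisors forcing $a_E(X,\Delta,\fa)$ below every bound while staying in $A$, concluding $\mld_Z(X,\Delta,\fb)=-\infty$ as well.

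The crux, and the step I expect to be the main obstacle, is the boundedness input: that $\mld_Z$ is attained (or approximated) by divisors $E$ with $a_E(X,\Delta)\le C\cdot\ord_E(\cI_Z)$, uniformly over all perturbations $\fb$ permitted by the hypothesis. Equivalently one must control the log discrepancies, hence the birational complexity, of the valuations computing $\mld_Z$ \emph{independently of the ideal}. This is the boundedness of mld-computing divisors, which is available on surfaces but open in general, where it is entangled with the ACC and termination conjectures; already the monomial example where finitely and infinitely complex divisors share the minimal value shows that one needs existence of a bounded computing divisor rather than boundedness of all of them. I would therefore concentrate the work here, reducing to $Z$ a point, normalizing $\Delta$ and the $\fa_j$, and extracting from the numerical relations among $a_E(X,\Delta)$, $\ord_E(\cI_Z)$ and the $\ord_E(\fa_j)$ a uniform control on the relevant $E$, as the decisive and hardest part of the argument.
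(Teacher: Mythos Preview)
Your proposal correctly isolates the elementary mechanism $\ord_E(\fa_j+\cI_Z^l)=\min(\ord_E\fa_j,\,l\,\ord_E\cI_Z)$ and rightly locates the whole difficulty in the uniform boundedness of the divisors computing $\mld_Z$. But this is precisely where the argument stops being a proof: the statement is an open conjecture in general, and the paper only establishes the surface case. Your proposed bound $a_E(X,\Delta)\le C\cdot\ord_E\cI_Z$ must hold for some computing divisor of $\mld_Z(X,\Delta,\fb)$ for \emph{every} admissible $\fb$, with the \emph{same} $C$; since $l$ is chosen from $C$ while the class of admissible $\fb$ depends on $l$, you in fact need $C$ uniform over all $\fb$ with the given exponents, independent of any congruence condition. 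That is essentially the conjectural boundedness of mld-computing divisors, tied to ACC and termination, and it is not available---even on surfaces it is not clear how to extract it in the uniform form you need, because the log resolution on which the mld is realised varies with $\fb$.

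The paper's surface proof takes a genuinely different route and avoids this boundedness altogether. After reducing to $X$ smooth, $\Delta=0$, $Z=\{x\}$, and $(X,\fa)$ purely log terminal with unique non-klt centre a smooth curve $C$, it chooses $s,t,t'>0$ with $\mld_x(X,sD,\fa\fm^{t'})=\mld_x(X,\fa\fm^t)=0$ and applies the already-proved $\mld=0$ case (Koll\'ar, de~Fernex--Ein--Musta\c{t}\u{a}) to these auxiliary triples, so that the same equalities hold with $\fb$ in place of $\fa$. On one fixed log resolution $\bar X\to X$ of $(X,\fa\fm)$ it then verifies $a_G(X,\fb)\ge c:=\mld_x(X,\fa)$ for every $G$ by a three-case split according to whether $c_{\bar X}(G)$ meets $\bar C$, $\bar D$, or neither. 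The case $c_{\bar X}(G)\subset\bar C$ is the subtle one and uses the two-dimensional fact that $\bar C$ meets a single exceptional divisor $F$, together with the connectedness lemma, to force $a_F(X,\fa\fm^t)=0$ and hence $\ord_F\fm^t\ge c$; then $a_G(X,\fb)\ge\ord_G\fm^t\ge\ord_F\fm^t\ge c$. Thus the paper leverages the known $\mld=0$ result rather than any boundedness of computing divisors; your approach, by contrast, would reprove the klt case cleanly but does not reach the plt-but-not-klt situation that is the heart of the surface argument.
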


The case of minimal log discrepancy zero is the semi-continuity of log canonicity. Conjecture \ref{cnj:mld} is proved in the Kawamata log terminal (klt) case in \cite[Theorem 1.6]{K10}. It is however inevitable to treat log canonical (lc) singularities in the study of limits of singularities; for example, the limit of klt pairs $(\bA^2_{x,y},(x,y^n)\cO_{\bA^2})$ indexed by $n\in\bN$ is the lc pair $(\bA^2,x\cO_{\bA^2})$. The purpose of this paper is to settle Musta\c{t}\u{a}'s conjecture for surfaces.

\begin{theorem}\label{thm:surface}
Conjecture \ref{cnj:mld} holds when $X$ is a surface.
\end{theorem}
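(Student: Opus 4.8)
The plan is to reduce the statement, via the special structure of surface singularities, to matching $\fa$ and $\fb$ on a single divisor computing $\mld_Z$; the two-dimensional input will be that such a computing divisor can be taken of bounded complexity, uniformly in $\fb$.

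Routine reductions come first: the assertion is local on $X$, so $X$ may be taken affine; and since $\mld_Z=\min_i\mld_{Z_i}$ over the irreducible components $Z_i$ of $Z$ (the centre of a divisorial valuation is irreducible) while $\fa_j+\cI_{Z_i}^l=(\fa_j+\cI_Z^l)+\cI_{Z_i}^l=(\fb_j+\cI_Z^l)+\cI_{Z_i}^l=\fb_j+\cI_{Z_i}^l$, it suffices to treat $Z$ irreducible, i.e.\ a point or a curve on the surface. Set $m:=\mld_Z(X,\Delta,\fa)$; note $m\in\{-\infty\}\cup[0,\infty)$, since a divisor with negative log discrepancy and centre in $Z$ would force $\mld_Z=-\infty$. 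If $m=-\infty$, some divisor $E$ with $C_X(E)\subseteq Z$ has $a_E(X,\Delta,\fa)<0$; fix it and put $l:=1+\max_j\ord_E\fa_j$. For any admissible $\fb$, using $\ord_E\cI_Z\ge1$ one gets $\ord_E\fa_j<l\le l\ord_E\cI_Z$, so the identity $\min(\ord_E\fa_j,l\ord_E\cI_Z)=\min(\ord_E\fb_j,l\ord_E\cI_Z)$ gives $\ord_E\fb_j=\ord_E\fa_j$ for all $j$, whence $a_E(X,\Delta,\fb)=a_E(X,\Delta,\fa)<0$ and $\mld_Z(X,\Delta,\fb)=-\infty=m$.

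Now suppose $m\ge0$, so $(X,\Delta,\fa)$ is log canonical near $Z$. Choose a log resolution $g\colon Y\to X$ of $(X,\Delta+Z+\sum_jV(\fa_j))$, refined by blowing up the finitely many points of $Z$ lying on $\Supp\Delta$, on some $V(\fa_j)$, or at which $X$ is singular. On $Y$ the relevant divisors form a simple normal crossing configuration, and log canonicity forces the coefficient $1-a_D(X,\Delta,\fa)$ at each component $D$ through a point over $Z$ to be $\le1$; a direct computation of the effect of one further point blow-up on the log discrepancy of a divisor in such a configuration shows it never drops below the log discrepancies of the components through the blown-up point. Hence $m$ is attained by one of the finitely many divisors already on $Y$ with centre contained in $Z$ (divisors over a general point of a curve $Z$ are dominated by the strict transform of $Z$ and are irrelevant). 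By the ideal-adic semi-continuity of log canonicity of de Fernex--Ein--Musta\c{t}\u{a} \cite{dFEM10}, after enlarging $l$ the pair $(X,\Delta,\fb)$ is again log canonical near $Z$, and the same discussion applies to it.

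The heart of the argument is a boundedness statement furnished by the theory of surface singularities: for the given $\fa$ there are a threshold $l_1$ and a constant $C$ such that, whenever $\fb_j+\cI_Z^{l_1}=\fa_j+\cI_Z^{l_1}$ for all $j$, the minimal log discrepancy $\mld_Z(X,\Delta,\fb)$ is attained by a divisor $D$ with $a_D(X,\Delta)\le C$ — and hence, since $\mld_Z(X,\Delta,\fb)\ge0$ and $\mld_Z(X,\Delta,\fb)=a_D(X,\Delta)-\sum_jr_j\ord_D\fb_j$, with $\ord_D\fb_j\le C/r_j$ for every $j$. The point is that for surfaces such a $D$ lies on a log resolution of $(X,\Delta+Z+\sum_jV(\fb_j))$ whose part over $Z$ — a bounded tower of point blow-ups — is already determined by the $\fb_j$ modulo a high enough power of $\cI_Z$, so both its combinatorics and the resulting log discrepancy remain bounded independently of $\fb$. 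Granting this, fix $l\ge l_1$ with $l>C/r_j$ for all $j$ (and large enough for the earlier steps), and let $D_\fa,D_\fb$ attain $\mld_Z(X,\Delta,\fa)$ and $\mld_Z(X,\Delta,\fb)$ respectively; applying the bound to $\fa$ itself as well, $\ord_{D_\fa}\fa_j\le C/r_j<l\le l\ord_{D_\fa}\cI_Z$ and $\ord_{D_\fb}\fb_j\le C/r_j<l\le l\ord_{D_\fb}\cI_Z$, so $\ord_{D_\fa}\fb_j=\ord_{D_\fa}\fa_j$ and $\ord_{D_\fb}\fa_j=\ord_{D_\fb}\fb_j$. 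This gives $\mld_Z(X,\Delta,\fb)\le a_{D_\fa}(X,\Delta,\fb)=a_{D_\fa}(X,\Delta,\fa)=m$ and $m\le a_{D_\fb}(X,\Delta,\fa)=a_{D_\fb}(X,\Delta,\fb)=\mld_Z(X,\Delta,\fb)$, so $\mld_Z(X,\Delta,\fa)=\mld_Z(X,\Delta,\fb)$. The main obstacle is exactly this uniform boundedness of the computing divisor in the log canonical but not klt range: one must prevent arbitrarily long chains of infinitely near points over $Z$ from driving the log discrepancy down, and check that the log canonical strata meeting $Z$ are unaffected by the $\cI_Z$-adic perturbation — the klt case being already \cite[Theorem 1.6]{K10}.
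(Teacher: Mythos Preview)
Your preliminary reductions and the $m=-\infty$ case are fine, and the final paragraph is correct: once you know that $\mld_Z(X,\Delta,\fb)$ is computed by some $D$ with $a_D(X,\Delta)\le C$ uniformly in $\fb$, the comparison of orders goes through. The gap is that this ``boundedness statement furnished by the theory of surface singularities'' is precisely the content of the theorem in the non-klt range, and you do not prove it. The justification offered---that the part of a log resolution of $(X,\Delta+Z+\sum_jV(\fb_j))$ over $Z$ is ``a bounded tower of point blow-ups \ldots\ already determined by the $\fb_j$ modulo a high enough power of $\cI_Z$''---is not an argument: the embedded resolution of $\fb$ genuinely depends on $\fb$, not only on its class modulo $\cI_Z^l$, and nothing you say prevents the computing divisor for $\fb$ from lying arbitrarily deep in the tower. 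You acknowledge this yourself in the last sentence. As written, the proposal reduces Theorem~\ref{thm:surface} to an unproven claim of the same strength.

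The paper bypasses this boundedness question entirely. After reducing (Lemma~\ref{lem:reduction}) to $X$ smooth, $Z=\{x\}$, and $(X,\fa)$ plt with unique non-klt centre a smooth curve $C$ through $x$, it never tries to locate a computing divisor for $\fb$. Instead it fixes one log resolution $\varphi\colon\bar X\to X$ of $(X,\fa\fm)$ and proves $a_G(X,\fb)\ge c:=\mld_x(X,\fa)$ for \emph{every} $G$ over $x$, splitting on the position of $c_{\bar X}(G)$. When $c_{\bar X}(G)\not\subset\bar C+\bar D$ the orders of $\fa_j$ and $\fb_j$ agree along $G$; when $c_{\bar X}(G)\subset\bar D$ one uses that $(X,sD,\fb\fm^{t'})$ is lc. The decisive case $c_{\bar X}(G)\subset\bar C$ exploits the surface-specific fact that, $C$ being smooth, $\bar C$ meets exactly one exceptional prime $F\subset\varphi^{-1}(x)$; the connectedness lemma then forces $a_F(X,\fa\fm^t)=0$, and one gets $a_G(X,\fb)\ge\ord_G\fm^t\ge\ord_F\fm^t\ge c$. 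It is this use of connectedness, not any bound on the complexity of a computing divisor for $\fb$, that supplies the missing two-dimensional input.
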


We must handle a non-klt triple $(X,\Delta,\fa)$  which has positive minimal log discrepancy, but unlike the klt case, the log canonicity is no longer retained once when $\fa$ is expanded. However for surfaces, we are reduced to the purely log terminal (plt) case in which $\fa$ has an expression $\fa'\cO_X(-C)$, then we can expand only the part $\fa'$ to apply the result on log canonicity.

We work over an algebraically closed field of characteristic zero. We use the notation below for singularities in the minimal model program.

\begin{notation}
A \textit{pair} $(X,\Delta)$ consists of a normal variety $X$ and an effective $\bR$-divisor $\Delta$ such that $K_X+\Delta$ is an $\bR$-Cartier $\bR$-divisor. We treat a \textit{triple} $(X,\Delta,\fa)$ by attaching a formal product $\fa=\prod_j\fa_j^{r_j}$ of finitely many coherent ideal sheaves $\fa_j$ with positive real exponents $r_j$. A prime divisor $E$ on a normal variety $X'$ with a proper birational morphism $\varphi\colon X'\to X$ is called a divisor \textit{over} $X$, and the image $\varphi(E)$ on $X$ is called the \textit{centre} of $E$ on $X$ and denoted by $c_X(E)$. We denote by $\cD_X$ the set of divisors over $X$. The \textit{log discrepancy} $a_E(X,\Delta,\fa)$ of $E$ is defined as $1+\ord_E(K_{X'}-\varphi^*(K_X+\Delta))-\ord_E\fa$. The triple $(X,\Delta,\fa)$ is said to be \textit{log canonical}, \textit{Kawamata log terminal} if $a_E(X,\Delta,\fa)\ge0$, $>0$ respectively for all $E\in\cD_X$, and said to be \textit{purely log terminal}, \textit{canonical}, \textit{terminal} if $a_E(X,\Delta,\fa)>0$, $\ge1$, $>1$ respectively for all exceptional $E\in\cD_X$. A centre $c_X(E)$ with $a_E(X,\Delta,\fa)\le0$ is called a \textit{non-klt centre}. Let $Z$ be a closed subset of $X$. The \textit{minimal log discrepancy} $\mld_Z(X,\Delta,\fa)$ over $Z$ is the infimum of $a_E(X,\Delta,\fa)$ for all $E\in\cD_X$ with centre in $Z$. We say that $E\in\cD_X$ \textit{computes} $\mld_Z(X,\Delta,\fa)$ if $c_X(E)\subset Z$ and $a_E(X,\Delta,\fa)=\mld_Z(X,\Delta,\fa)$ (or negative when $\mld_Z(X,\Delta,\fa)=-\infty$).
\end{notation}

Prior to the proof of Theorem \ref{thm:surface}, we collect standard reductions and known results on Conjecture \ref{cnj:mld}.

\begin{lemma}[{\cite[Remarks 1.5.3, 1.5.4]{K10}}]\label{lem:standard}
Conjecture \ref{cnj:mld} is reduced to the case when $X$ has $\bQ$-factorial terminal singularities, $\Delta=0$ and $Z$ is irreducible\textup{;} and it suffices to prove the inequality $\mld_Z(X,\fa)\le\mld_Z(X,\fb)$.
\end{lemma}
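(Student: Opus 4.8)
The plan is to verify the three asserted simplifications one at a time, in each case converting a threshold $l$ for the reduced problem into a threshold for the original one. The reductions are essentially independent, so I would carry them out in the order: reduce to one inequality, then to $\bQ$-factorial terminal $X$ with $\Delta=0$, then to irreducible $Z$.

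First I would show that the reverse inequality $\mld_Z(X,\Delta,\fb)\le\mld_Z(X,\Delta,\fa)$ costs nothing, so that only $\mld_Z(X,\Delta,\fa)\le\mld_Z(X,\Delta,\fb)$ remains to be proved. Fix a divisor $E\in\cD_X$ computing $\mld_Z(X,\Delta,\fa)$; then $c_X(E)\subset Z$, whence $\ord_E\cI_Z\ge1$. Taking $l>\max_j\ord_E\fa_j$, the inclusions $\fb_j\subset\fa_j+\cI_Z^l$ and $\fa_j\subset\fb_j+\cI_Z^l$ force $\ord_E\fb_j=\ord_E\fa_j$ for every $j$, since $\ord_E\cI_Z^l\ge l>\ord_E\fa_j$ renders the $\cI_Z^l$-term irrelevant to the order along $E$. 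Hence $\ord_E\fb=\ord_E\fa$ and $a_E(X,\Delta,\fb)=a_E(X,\Delta,\fa)$, so the same $E$ computes $\mld_Z(X,\Delta,\fb)\le\mld_Z(X,\Delta,\fa)$ (read as $-\infty$ when $\mld_Z(X,\Delta,\fa)=-\infty$, as then $a_E<0$). This $l$ depends only on $\fa$, so it may be enlarged by any threshold supplied by the opposite inequality.

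Next I would pass to a model with $\bQ$-factorial terminal singularities and no boundary. Take a $\bQ$-factorial terminalization $\varphi\colon(X',\Delta')\to(X,\Delta)$, so that $X'$ is $\bQ$-factorial terminal, $K_{X'}+\Delta'=\varphi^*(K_X+\Delta)$, and $\Delta'\ge0$ (for a surface the minimal resolution serves, as its exceptional discrepancies over $(X,\Delta)$ are $\le 1$). Crepancy gives $a_F(X,\Delta,\fa)=a_F(X',\Delta',\fa\cO_{X'})$ for all $F\in\cD_X=\cD_{X'}$, and since $c_X(F)\subset Z$ is equivalent to $c_{X'}(F)\subset Z':=\varphi^{-1}(Z)$, this yields $\mld_Z(X,\Delta,\fa)=\mld_{Z'}(X',\Delta',\fa\cO_{X'})$. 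As $X'$ is $\bQ$-factorial, each prime component of $\Delta'=\sum_k d'_k D'_k$ is $\bQ$-Cartier; choosing $m$ with $m\Delta'$ integral and Cartier, the formal product $\mathfrak{d}=\prod_k\cO_{X'}(-mD'_k)^{d'_k/m}$ satisfies $\ord_F\mathfrak{d}=\ord_F\Delta'$, so replacing $(X',\Delta',\fa\cO_{X'})$ by $(X',0,\fa\cO_{X'}\cdot\mathfrak{d})$ preserves every log discrepancy while eliminating the boundary. The factor $\mathfrak{d}$ is literally the same for $\fa$ and for $\fb$, so it leaves the congruence hypothesis untouched; and from $\cI_Z\cO_{X'}\subset\cI_{Z'}$ one gets $(\cI_Z\cO_{X'})^l\subset\cI_{Z'}^{l'}$ for $l\ge l'$, so extending $\fa_j+\cI_Z^l=\fb_j+\cI_Z^l$ along $\varphi$ and enlarging by $\cI_{Z'}^{l'}$ gives $\fa_j\cO_{X'}+\cI_{Z'}^{l'}=\fb_j\cO_{X'}+\cI_{Z'}^{l'}$. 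Finally, decomposing $Z=\bigcup_i Z_i$ into irreducible components, every centre $c_X(E)$ is irreducible, so $\mld_Z=\min_i\mld_{Z_i}$ for both $\fa$ and $\fb$; since $\cI_Z\subset\cI_{Z_i}$, a congruence modulo $\cI_Z^l$ implies one modulo $\cI_{Z_i}^{l_i}$ whenever $l\ge l_i$, and $l=\max_i l_i$ assembles the component thresholds into one for $Z$.

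The substance here is the threshold bookkeeping rather than any geometry: at each stage a single integer produced for the simplified datum must be shown to dominate the truncation of the original, and this rests on the elementary containments $\cI_Z\cO_{X'}\subset\cI_{Z'}$ and $\cI_Z\subset\cI_{Z_i}$ together with the attainment of $\mld_Z(X,\Delta,\fa)$ by one divisor. The only non-elementary ingredient is the existence of the $\bQ$-factorial terminalization with $\Delta'\ge0$, which for surfaces is the classical minimal resolution, so nothing beyond resolution of singularities is required; the point demanding care is that absorbing the $\bR$-boundary $\Delta'$ into the ideal product respects the $\cI_{Z'}$-adic congruence, which the identity $\ord_F\mathfrak{d}=\ord_F\Delta'$ and the coincidence of $\mathfrak{d}$ for $\fa$ and $\fb$ secure.
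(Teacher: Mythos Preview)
The paper does not supply its own proof of this lemma; it is recorded purely as a citation of \cite[Remarks 1.5.3, 1.5.4]{K10}, so there is no in-paper argument to compare against. Your reconstruction of the three reductions is correct in substance and follows the expected lines: the easy inequality via a single computing divisor, passage to a $\bQ$-factorial terminal crepant model with the boundary absorbed into the ideal product, and decomposition of $Z$ into irreducible components, with the threshold bookkeeping handled by the containments $\cI_Z\cO_{X'}\subset\cI_{Z'}$ and $\cI_Z\subset\cI_{Z_i}$.

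Two small points of phrasing. First, when you write ``$m\Delta'$ integral and Cartier'', recall that $\Delta'$ is an $\bR$-divisor, so $m\Delta'$ need never be integral; what you actually use (and what your formula for $\mathfrak{d}$ correctly encodes) is only that $mD'_k$ is Cartier for each prime component $D'_k$, with the real coefficients $d'_k/m$ kept as exponents. Second, ``exceptional discrepancies over $(X,\Delta)$ are $\le 1$'' for the minimal resolution should read ``$\le 0$'' (equivalently, log discrepancies $\le 1$); it is this non-positivity, together with $\varphi^*\Delta\ge 0$, that forces $\Delta'\ge 0$. Neither slip affects the validity of the argument.
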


\begin{theorem}\label{thm:collection}
Conjecture \ref{cnj:mld} holds in each of the following cases.
\begin{enumerate}
\item\label{itm:coll.known.nonlc}
$\mld_Z(X,\fa)=-\infty$.
\item\label{itm:coll.known.lc}
\textup{(Koll\'ar \cite{Kl08}, de Fernex, Ein, Musta\c{t}\u{a} \cite{dFEM10})} $\mld_Z(X,\fa)=0$.
\item\label{itm:coll.known.klt}
\textup{(\cite[Theorem 1.6]{K10})} $(X,\fa)$ is klt about $Z$.
\end{enumerate}
\end{theorem}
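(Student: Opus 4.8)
The plan is to dispatch the three cases separately, using Lemma~\ref{lem:standard} to reduce Conjecture~\ref{cnj:mld} in each of them to the production of an integer $l$ for which the single inequality $\mld_Z(X,\fa)\le\mld_Z(X,\fb)$ holds whenever $\fa_j+\cI_Z^l=\fb_j+\cI_Z^l$ for all $j$; throughout I may assume that $X$ has $\bQ$-factorial terminal singularities, that $\Delta=0$, and that $Z$ is irreducible. Two of the three cases are then immediate. When $\mld_Z(X,\fa)=-\infty$, the inequality reads $-\infty\le\mld_Z(X,\fb)$, which holds for every $\fb$, so already $l=1$ works and case~(\ref{itm:coll.known.nonlc}) needs no further argument. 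When $(X,\fa)$ is klt about $Z$, case~(\ref{itm:coll.known.klt}) is precisely \cite[Theorem~1.6]{K10}, which establishes the full equality $\mld_Z(X,\fa)=\mld_Z(X,\fb)$ in the klt setting, and after the reductions of Lemma~\ref{lem:standard} the required inequality is a special instance of that result.

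The remaining case~(\ref{itm:coll.known.lc}), where $\mld_Z(X,\fa)=0$, is the one carrying content. Here the hypothesis says that $(X,\fa)$ is log canonical over $Z$: every divisor with centre in $Z$ has nonnegative log discrepancy, while some such divisor attains the value $0$. The inequality to be proved, namely $0\le\mld_Z(X,\fb)$, is exactly the assertion that $(X,\fb)$ is again log canonical over $Z$, and this is the ideal-adic semi-continuity of log canonicity proved by Koll\'ar \cite{Kl08} and by de Fernex, Ein and Musta\c{t}\u{a} \cite{dFEM10}. I would therefore invoke their theorem to obtain an integer $l$, depending only on $\fa$ and $Z$, such that matching each $\fa_j$ with $\fb_j$ modulo $\cI_Z^l$ leaves the non-log-canonical locus along $Z$ unchanged; this yields $\mld_Z(X,\fb)\ge0$, which is the desired inequality.

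The main obstacle lies entirely in this last case, and it is one of careful transcription rather than of new geometry. The cited results are usually phrased for the log canonicity of a single pair, or for the log canonical thresholds of a single ideal, whereas here they must be applied to the local centre condition $c_X(E)\subset Z$ and to the formal product $\fa=\prod_j\fa_j^{r_j}$ with positive real exponents $r_j$. I expect to handle this by localizing their statements along the closed locus where log canonicity fails and by applying them to the finitely many ideals $\fa_j$, then taking $l$ at least as large as each of the resulting integers so that a single $l$ serves uniformly across the product. Once this translation is made explicit, the three cases together establish Conjecture~\ref{cnj:mld} under each of the stated hypotheses.
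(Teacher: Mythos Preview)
Your proposal is correct and aligns with the paper's treatment: the paper gives no proof of this theorem at all, instead embedding the attributions directly in the statement and treating the three cases as known results from \cite{Kl08}, \cite{dFEM10}, and \cite{K10}. Your write-up is essentially an expanded gloss on those citations, making explicit the reduction via Lemma~\ref{lem:standard} to the one-sided inequality $\mld_Z(X,\fa)\le\mld_Z(X,\fb)$ and then observing that (\ref{itm:coll.known.nonlc}) is vacuous, (\ref{itm:coll.known.lc}) is the cited semi-continuity of log canonicity, and (\ref{itm:coll.known.klt}) is \cite[Theorem~1.6]{K10} verbatim.

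One small point of comparison: where you describe the handling of case~(\ref{itm:coll.known.lc}) as a matter of ``careful transcription'' of the cited results to the setting of formal products $\prod_j\fa_j^{r_j}$ and centres in $Z$, the paper is in fact more concrete. Remark~\ref{rmk:effective} records that one may take for $l$ any integer exceeding $\max_j\ord_E\fa_j/\ord_E\cI_Z$, where $E$ is a fixed divisor computing $\mld_Z(X,\fa)=0$; this single estimate already accommodates both the product structure and the centre condition, and it is what the paper later uses (via the inequality~(\ref{eqn:ord.a})) in the proof of Lemma~\ref{lem:lc}. So the ``obstacle'' you anticipate is resolved directly rather than by a separate localization argument.
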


\begin{remark}\label{rmk:effective}
In (\ref{itm:coll.known.lc}) above, one can take as $l$ any integer greater than the maximum of $\ord_E\fa_j/\ord_E\cI_Z$, by fixing $E\in\cD_X$ which computes $\mld_Z(X,\fa)$. The estimate of $l$ in (\ref{itm:coll.known.klt}) involves the log canonical threshold of $\fa$.
\end{remark}

Conjecture \ref{cnj:mld} for surfaces is reduced to the plt case.

\begin{lemma}\label{lem:reduction}
One may assume the following for Conjecture \ref{cnj:mld} for surfaces.
\begin{enumerate}
\item
$X$ is a smooth surface, $\Delta=0$ and $Z$ is a closed point.
\item
$(X,\fa)$ is plt with unique non-klt centre $C$.
\item
$C$ is a smooth curve.
\end{enumerate}
\end{lemma}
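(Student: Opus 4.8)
The plan is to carry out the reductions in the order listed, starting from the output of Lemma~\ref{lem:standard}, so we may already assume $X$ has $\bQ$-factorial terminal singularities, $\Delta=0$, $Z$ is irreducible, and that we only need the inequality $\mld_Z(X,\fa)\le\mld_Z(X,\fb)$. For a surface, $\bQ$-factorial terminal means smooth, so $X$ is a smooth surface; and $Z$ is either a closed point or a curve. If $Z$ is a curve, then a divisor computing $\mld_Z$ can be taken with centre the generic point of (a component of) $Z$, which reduces the computation to the localisation at that generic point, a one-dimensional situation where the conjecture is easy (or one invokes the known cases). Shrinking, we may thus assume $Z$ is a closed point, giving~(i). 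By Theorem~\ref{thm:collection}, parts~(\ref{itm:coll.known.nonlc})--(\ref{itm:coll.known.klt}), we may assume $0<\mld_Z(X,\fa)<\infty$ and that $(X,\fa)$ is not klt about $Z$; so $(X,\fa)$ is lc near $Z$ but has a non-klt centre meeting $Z$, which (since $\mld_Z>0$ forces the centre not to be the point itself in the relevant sense) is a curve $C$ through the point $Z$. This is the plt case: $(X,\fa)$ is plt with non-klt centre $C$, and because $a_E(X,\fa)>0$ for exceptional $E$, $C$ must be reduced and, on the smooth surface $X$, a curve with at worst nodal-type behaviour — we then further argue $C$ is smooth at $Z$.

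The key step is passing from~(i)+(ii) to~(iii), i.e.\ arranging $C$ smooth at the point $Z$. If $C$ is already smooth there, nothing to do. Otherwise $C$ has a singular point at $Z$; I would take the blow-up $\sigma\colon\tilde X\to X$ at $Z$ (or a sequence of such blow-ups resolving the singularity of $C$), with exceptional curve $E_0$. Pulling back, write $K_{\tilde X}+\tilde\Delta=\sigma^*K_X$ where $\tilde\Delta$ is supported on $E_0$, and pull back the ideal $\fa$; the point is that $\mld$ over $Z$ is unchanged under this birational modification (it is an infimum over the same set of divisors), and the new closed subset to work over becomes $E_0$, or rather the point(s) of $\tilde X$ over $Z$ lying on the strict transform of $C$. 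After finitely many blow-ups the strict transform $\tilde C$ is smooth and we work over a point of $\tilde X\cap\tilde C$ where $\tilde C$ is smooth. One must check that the hypothesis of Conjecture~\ref{cnj:mld} is preserved: if $\fa_j+\cI_Z^l=\fb_j+\cI_Z^l$ upstairs for suitable $l$, then pulling $\fb$ back through the same $\sigma$ (composed with the isomorphism away from $Z$) yields ideals agreeing with the pullbacks of $\fa_j$ modulo a power of the ideal of the new centre; this is where one spends a routine computation tracking how $\cI_Z^l$ transforms, and choosing $l$ large enough. The triple stays plt with the single non-klt centre now the smooth curve $\tilde C$.

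I also need to establish, within step~(ii), that the non-klt locus is a \emph{single} smooth-at-$Z$ curve $C$ and that $(X,\fa)$ is plt rather than merely lc. The point is: on a smooth surface, an lc but non-klt triple $(X,\fa)$ with $\mld_Z(X,\fa)>0$ has its non-klt centres of codimension one (a codimension-two non-klt centre would be a point $p$, and then any divisor over $p$ that is a non-klt place has centre $p\subset Z$ forcing $\mld_Z\le 0$, contradiction) — hence each non-klt centre is a curve. If two such curves met at $Z$, the point $Z$ would itself be a non-klt centre (intersection of non-klt centres is non-klt), again giving $\mld_Z\le 0$, contradiction; so exactly one curve $C$ passes through $Z$, and after shrinking $X$ we may take $C$ to be the unique non-klt centre. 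That $a_E(X,\fa)>0$ for all exceptional $E$ is then automatic: an exceptional non-klt place would have centre a point, which is excluded as above. Hence $(X,\fa)$ is plt with unique non-klt centre $C$, completing~(ii). The main obstacle I anticipate is the bookkeeping in the blow-up reduction of step~(iii): ensuring the ideal-adic hypothesis transports correctly under the modification and that the exponents/residual coefficients on the exceptional divisor do not destroy pltness — one must verify that $\ord_{E_0}\fa$ together with the discrepancy contribution keeps $a_{E_0}(\tilde X,\tilde\Delta,\tilde\fa)>0$, which holds precisely because $(X,\fa)$ was plt to begin with, but this needs a careful local analysis at $Z$.
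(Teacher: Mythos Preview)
Your reductions to (i) and (ii) are essentially correct and parallel the paper's, though the paper reaches ``$Z$ is a closed point'' by first invoking the klt case (Theorem~\ref{thm:collection}(\ref{itm:coll.known.klt})) to restrict attention to $Z\cap C$, which is automatically a finite set, rather than handling the case of $Z$ a curve separately.

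The real divergence is in (iii). You propose to blow up $X$ at $Z$ to resolve a possible singularity of $C$, then redo the bookkeeping. This is unnecessary and, as you yourself flag, undoes the normalisations already achieved in (i): the new $\Delta$ is nonzero and the new centre is the exceptional curve or points on it. The paper instead observes that $C$ is \emph{already} smooth at $x$. Indeed, for the exceptional divisor $E$ of the single blow-up at $x$ one has $a_E(X,\fa)\le a_E(X,C)=2-\mult_xC$, because each component $C_i$ of the non-klt locus satisfies $\ord_{C_i}\fa\ge1$ and hence $\ord_E\fa\ge\mult_xC$; on the other hand $a_E(X,\fa)>0$ since $\mld_x(X,\fa)>0$. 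Therefore $\mult_xC\le1$. This single inequality simultaneously yields that $C$ has only one branch through $x$ (your ``only one curve through $Z$'' step) and that this branch is smooth there. So your appeal to ``intersection of non-klt centres is non-klt'' is valid but already subsumed, and the blow-up manoeuvre with its attendant bookkeeping should be replaced by this one-line multiplicity bound.
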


\begin{proof}
We may assume that $X$ is smooth with $\Delta=0$ by Lemma \ref{lem:standard}, and may assume $\mld_Z(X,\fa)>0$ by Theorem \ref{thm:collection}(\ref{itm:coll.known.nonlc}), (\ref{itm:coll.known.lc}). Let $C$ be the non-klt locus of $(X,\fa)$. By Theorem \ref{thm:collection}(\ref{itm:coll.known.klt}), we have only to work about $Z\cap C$. The assumption $\mld_Z(X,\fa)>0$ means that $Z$ contains no non-klt centre, whence $Z\cap C$ consists of finitely many closed points. By replacing $Z$ with $Z\cap C$ and working locally, we may assume that $Z$ is a closed point $x$, and $(X,\fa)$ has the non-klt locus $C$ which is a curve. The exceptional divisor $E$ of the blow-up of $X$ at $x$ has positive log discrepancy $a_E(X,\fa)$, but it is at most $a_E(X,C)=2-\mult_xC$. So $C$ must be smooth at $x$.
\end{proof}

We work locally about the closed point $x=Z$ with the assumptions in Lemma \ref{lem:reduction}. We denote by $\fm$ the maximal ideal sheaf at $x$, and use the notation similar to \cite[Definition 1.3]{K10}.

\begin{definition}
For $\fb=\prod_j\fb_j^{r_j}$ and $l\in\bN$, we write $\fa\equiv_l\fb$ if $\fa_j+\fm^l=\fb_j+\fm^l$ for all $j$.
\end{definition}

Set $c:=\mld_x(X,\fa)$. The non-trivial locus of $\fa$ is a divisor of form $C+D$. Since $(X,\fa)$ is plt, we can fix $s,t>0$ and $t'\ge0$ such that $\mld_x(X,sD,\fa\fm^{t'})=\mld_x(X,\fa\fm^t)=0$. We fix a log resolution $\varphi\colon\bar{X}\to X$ of $(X,\fa\fm)$, that is, $\prod_j\fa_j\fm\cO_{\bar{X}}$ defines a divisor with simple normal crossing support. Let $\bar{C},\bar{D}$ denote the strict transform of $C,D$. Since $C$ is smooth, $\bar{C}$ intersects only one prime divisor $F$ in $\varphi^{-1}(x)$. This will play a crucial role in the proof. By blowing up $\bar{X}$ further, we may assume that every divisor $E$ in $\varphi^{-1}(x)$ intersecting $\bar{D}$ satisfies
\begin{align}\label{eqn:ord.D}
\ord_ED\ge s^{-1}c-1.
\end{align}
We take an integer $l$ such that
\begin{align}\label{eqn:ord.a}
l>\ord_E\fa_j/\ord_E\fm
\end{align}
for all $j$ and $E\subset\varphi^{-1}(x)$. The lemma below is an application of Theorem \ref{thm:collection}(\ref{itm:coll.known.lc}) and Remark \ref{rmk:effective}, with the inequality (\ref{eqn:ord.a}).

\begin{lemma}\label{lem:lc}
$\mld_x(X,sD,\fb\fm^{t'})=\mld_x(X,\fb\fm^t)=0$ for any $\fb\equiv_l\fa$.
\end{lemma}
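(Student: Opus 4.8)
The plan is to read Lemma \ref{lem:lc} off the semi-continuity of log canonicity, Theorem \ref{thm:collection}(\ref{itm:coll.known.lc}). Both triples $(X,\fa\fm^t)$ and $(X,sD,\fa\fm^{t'})$ are log canonical with minimal log discrepancy $0$ over $x$, and for $\fb\equiv_l\fa$ the extra factors $\fm^t$ and $\fm^{t'}$ are left untouched, so $\fb\fm^t\equiv_l\fa\fm^t$ and $\fb\fm^{t'}\equiv_l\fa\fm^{t'}$. Hence Theorem \ref{thm:collection}(\ref{itm:coll.known.lc}), applied to each triple with the extra ideal treated as one more factor of the formal product, gives $\mld_x(X,\fb\fm^t)=0$ and $\mld_x(X,sD,\fb\fm^{t'})=0$ as soon as $l$ is past the threshold it produces. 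So the only thing to verify is that the $l$ already fixed by (\ref{eqn:ord.a}) is large enough.

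For this I would invoke Remark \ref{rmk:effective}: for the triple $(X,\fa\fm^t)$ the threshold may be taken to be any integer greater than $\max_j\ord_E\fa_j/\ord_E\fm$ for one fixed divisor $E$ computing $\mld_x(X,\fa\fm^t)$ --- the factor $\fm^t$, not being perturbed by $\equiv_l$, contributes nothing --- and likewise for $(X,sD,\fa\fm^{t'})$. So it suffices to choose, for each of the two triples, such a divisor $E$ lying in $\varphi^{-1}(x)$; then (\ref{eqn:ord.a}) says precisely $l>\ord_E\fa_j/\ord_E\fm$ for every $j$, which is what we need. Such an $E$ exists because $\varphi$ is a log resolution not only of $(X,\fa\fm)$ but also of $(X,\fa\fm^t)$ and of $(X,sD,\fa\fm^{t'})$ --- the curves $C$ and $D$ and the ideal $\fm$ are already among the data it resolves --- and the minimal log discrepancy of a log canonical pair over the point $x$ is attained by one of the finitely many components of $\varphi^{-1}(x)$, by the usual description of log discrepancies of divisors over a log smooth pair.

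The delicate point, and the closest thing to a real obstacle, is this last assertion. A divisor $E'$ with centre $x$ has its centre on $\bar{X}$ contained in $\varphi^{-1}(x)$, but that centre may also lie on the strict transform $\bar{C}$ or $\bar{D}$, which are not over $x$. The log discrepancy of $E'$ over the log smooth pair is a combination, with coefficients at least $1$ on the components through the centre, of the log discrepancies of those components, and log canonicity makes every such term nonnegative; so the contributions of $\bar{C}$ and $\bar{D}$ can only be discarded, leaving $a_{E'}$ bounded below by $a_E$ for an exceptional component $E\subset\varphi^{-1}(x)$ through the centre. One has to know that such an $E$ is genuinely present at every relevant centre: since $C$ is smooth, $\bar{C}$ meets $\varphi^{-1}(x)$ only in the single point of $F$, and the simple normal crossing condition forbids a triple point there, with the analogous bookkeeping at $\bar{D}$. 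Granting this, Remark \ref{rmk:effective} together with (\ref{eqn:ord.a}) finishes the proof.
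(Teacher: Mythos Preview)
Your proof is correct and follows exactly the route the paper indicates: apply Theorem~\ref{thm:collection}(\ref{itm:coll.known.lc}) to the two log canonical triples, invoke Remark~\ref{rmk:effective} for the explicit threshold, and observe that inequality~(\ref{eqn:ord.a}) guarantees this threshold is met. Your third paragraph makes explicit what the paper tacitly uses---that on the log resolution $\varphi$ the minimal log discrepancy over $x$ is already attained by some component $E\subset\varphi^{-1}(x)$---and while your worry about $\bar C,\bar D$ is slightly over-cautious (any point of $\varphi^{-1}(x)$ automatically lies on an exceptional component, so the bookkeeping at the end is unnecessary), the argument is sound.
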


We write
\begin{align*}
\fa_j\cO_{\bar{X}}=\cO_{\bar{X}}(-H_j-V_j)
\end{align*}
with divisors $H_j,V_j$ such that $\Supp H_j\subset\bar{C}+\bar{D}$ and $\Supp V_j\subset\varphi^{-1}(x)$. Let $\fb\equiv_l\fa$. For $E\subset\varphi^{-1}(x)$, we have $\ord_E\fa_j<\ord_E\fm^l$ by (\ref{eqn:ord.a}), and $\ord_E\fa_j=\ord_E\fb_j$ by $\fa_j+\fm^l=\fb_j+\fm^l$. Hence we can write
\begin{align*}
\fb_j\cO_{\bar{X}}=\fb'_j\cO_{\bar{X}}(-V_j),\qquad\fm^l\cO_{\bar{X}}=\cO_{\bar{X}}(-M_j-V_j),
\end{align*}
with an ideal sheaf $\fb'_j$ and an effective divisor $M_j$ such that $\Supp M_j=\varphi^{-1}(x)$. Then the equality $\fa_j+\fm^l=\fb_j+\fm^l$ induces
\begin{align}\label{eqn:support}
\cO_{\bar{X}}(-H_j)+\cO_{\bar{X}}(-M_j)=\fb'_j+\cO_{\bar{X}}(-M_j).
\end{align}

The following lemma shows $\mld_x(X,\fb)\ge c$, which with Lemma \ref{lem:standard} completes Theorem \ref{thm:surface}.

\begin{lemma}\label{lem:centre}
$a_G(X,\fb)\ge c$ for any $\fb\equiv_l\fa$ and $G\in\cD_X$ with $c_X(G)=x$.
\end{lemma}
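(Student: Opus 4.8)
The plan is to bound the log discrepancy $a_G(X,\fb)$ from below by $c$ by distinguishing cases according to where the centre of $G$ lies on the log resolution $\bar X$ and how $G$ interacts with the strict transform $\bar C$. The key structural input is that since $C$ is smooth, $\bar C$ meets exactly one divisor $F$ in $\varphi^{-1}(x)$, so the only place where the ``curve part'' $C$ of the non-trivial locus can influence a divisor centred at $x$ is through $F$. Concretely, I would first pass to a common log resolution that dominates $\bar X$ and on which $G$ appears as a divisor; by the further blow-ups built into the construction of $\bar X$ (the inequality \eqref{eqn:ord.D}) one controls $\ord_G D$ whenever $c_G$ meets $\bar D$.

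First I would treat the case in which $c_{\bar X}(G)$ is not contained in $\bar C$, equivalently the discrepancy contribution along $G$ does not see the divisor $H_j$ supported on $\bar C$. There, using \eqref{eqn:support}, the ideals $\fb'_j$ and $\cO_{\bar X}(-H_j)$ agree modulo $\cO_{\bar X}(-M_j)$, and since $M_j$ has full support $\varphi^{-1}(x)$ with multiplicities large relative to \eqref{eqn:ord.a}, one gets $\ord_G\fb=\ord_G\fa$, hence $a_G(X,\fb)=a_G(X,\fa)\ge c$. Second, and this is the substantive case, I would suppose $c_{\bar X}(G)\subset\bar C$, so $c_{\bar X}(G)$ is the single point $\bar C\cap F$. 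Here I would exploit Lemma \ref{lem:lc}: the pairs $(X,sD,\fb\fm^{t'})$ and $(X,\fb\fm^t)$ are still log canonical, so $a_G(X,sD,\fb\fm^{t'})\ge0$ and $a_G(X,\fb\fm^t)\ge0$. Writing these out gives $a_G(X,\fb)\ge s\ord_G D+t'\ord_G\fm$ and $a_G(X,\fb)\ge t\ord_G\fm$; combining with the value $c=\mld_x(X,\fa)$ and the bound \eqref{eqn:ord.D} on $\ord_G D$ — which applies exactly when $G$ also meets $\bar D$ — should force $a_G(X,\fb)\ge c$. If $G$ does not meet $\bar D$, then near $c_{\bar X}(G)$ the ideal $\fb$ behaves like $\fb'\cO_{\bar X}(-C)$ with $\fb'\equiv\fa'$ modulo a high power of the ideal of $\bar C\cap F$, and I would invoke the log-canonicity result (Theorem \ref{thm:collection}\eqref{itm:coll.known.lc}) applied to the ``$\fa'$ part'' only, as the introduction indicates, comparing $a_G$ with $a_G$ of the plt model $(X,C)$.

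The main obstacle I anticipate is the second case: controlling $a_G(X,\fb)$ when $G$ is centred at the tangency point $\bar C\cap F$ and $G$ is a high-order divisor extracted there. One has to ensure that perturbing $\fa$ to $\fb$ (which may destroy the plt property and the precise value of the non-klt threshold along $C$) nevertheless cannot drop the discrepancy below $c$; the trick is that the log-canonicity statements of Lemma \ref{lem:lc} survive the perturbation, and they pin down $\ord_G\fb$ from below through the auxiliary parameters $s,t,t'$ precisely enough. Making the numerology work — choosing how the bounds $s\ord_G D+t'\ord_G\fm\le a_G(X,\fb)$ and $t\ord_G\fm\le a_G(X,\fb)$ interact with $\ord_G D\ge s^{-1}c-1$ to yield $a_G(X,\fb)\ge c$ on the nose — is where the real computation lies, and it is the step I would write out most carefully. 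Finally, combining both cases gives $\mld_x(X,\fb)\ge c=\mld_x(X,\fa)$, which together with Lemma \ref{lem:standard} yields Theorem \ref{thm:surface}.
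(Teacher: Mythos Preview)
Your case division has a real gap. You split into $c_{\bar X}(G)\not\subset\bar C$ versus $c_{\bar X}(G)\subset\bar C$, and in the first case you assert that \eqref{eqn:support} forces $\ord_G\fb=\ord_G\fa$ because ``$G$ does not see the divisor $H_j$ supported on $\bar C$.'' But $H_j$ is supported on $\bar C+\bar D$, not on $\bar C$ alone: the non-trivial locus of $\fa$ is $C+D$, and on $\bar X$ the horizontal part $H_j$ records both strict transforms. When $c_{\bar X}(G)\subset\bar D$ and $c_{\bar X}(G)\not\subset\bar C$, equation \eqref{eqn:support} only tells you that $\Supp\cO_{\bar X}/\fb'_j\cap\varphi^{-1}(x)\subset\bar C+\bar D$, which does \emph{not} exclude $c_{\bar X}(G)$; so $\ord_G\fb'_j$ may well be positive and $\ord_G\fb\neq\ord_G\fa$ in general. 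The paper isolates this as a separate case and handles it with the bound \eqref{eqn:ord.D} on $\ord_ED$ together with Lemma \ref{lem:lc} for $(X,sD,\fb\fm^{t'})$: one gets $\ord_GD\ge\ord_ED+1\ge s^{-1}c$ and $a_G(X,\fb)\ge s\ord_GD\ge c$. Your attempt to fold this use of \eqref{eqn:ord.D} into the $\bar C$--case is misplaced.

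In the genuine $\bar C$--case ($c_{\bar X}(G)\subset\bar C$, so $c_{\bar X}(G)$ is the point $\bar C\cap F$), you correctly extract $a_G(X,\fb)\ge t\,\ord_G\fm$ from Lemma \ref{lem:lc}, but you never show why $t\,\ord_G\fm\ge c$. The missing idea is the connectedness lemma: since $(X,\fa\fm^t)$ has $\mld_x=0$, some exceptional $E\subset\varphi^{-1}(x)$ has $a_E(X,\fa\fm^t)=0$, and connectedness of the non-klt locus forces the union of such $E$ to meet $\bar C$, hence to contain $F$. Thus $a_F(X,\fa\fm^t)=0$, i.e.\ $t\,\ord_F\fm=a_F(X,\fa)\ge c$, and then $\ord_G\fm\ge\ord_F\fm$ because $c_{\bar X}(G)\subset F$. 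Your fallback suggestion of ``applying the log-canonicity result to the $\fa'$ part only'' is not a proof: you have no control over how the $\fb$--perturbation behaves along $\bar C$ without first pinning down that $F$ itself is an lc centre of $(X,\fa\fm^t)$.
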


\begin{proof}
We divide into three cases according to the position of $c_{\bar{X}}(G)$.
\begin{enumerate}
\item\label{itm:notCD}
$c_{\bar{X}}(G)\not\subset\bar{C}+\bar{D}$.
\item\label{itm:D}
$c_{\bar{X}}(G)\subset\bar{D}$.
\item\label{itm:C}
$c_{\bar{X}}(G)\subset\bar{C}$.
\end{enumerate}

(\ref{itm:notCD})
By (\ref{eqn:support}), $\Supp H_j\cap\Supp M_j=\Supp\cO_{\bar{X}}/\fb'_j\cap\Supp M_j$, whence $\Supp\cO_{\bar{X}}/\fb'_j\cap\varphi^{-1}(x)\subset\bar{C}+\bar{D}$. In particular, $c_{\bar{X}}(G)\not\subset\Supp\cO_{\bar{X}}/\fb'_j$. This implies $\ord_G\fb_j=\ord_GV_j=\ord_G\fa_j$, so $a_G(X,\fb)=a_G(X,\fa)\ge c$.

(\ref{itm:D})
Take a prime divisor $E$ in $\varphi^{-1}(x)$ such that $c_{\bar{X}}(G)\subset E$. By (\ref{eqn:ord.D}), $\ord_GD=\ord_ED\cdot\ord_GE+\ord_G\bar{D}\ge\ord_ED+1\ge s^{-1}c$. Lemma \ref{lem:lc} for $(X,sD,\fb\fm^{t'})$ implies $a_G(X,\fb)\ge s\ord_GD$. These two inequalities induce $a_G(X,\fb)\ge c$.

(\ref{itm:C})
$c_{\bar{X}}(G)$ is in the unique divisor $F\subset\varphi^{-1}(x)$ intersecting $\bar{C}$. There exists a divisor $E$ in $\varphi^{-1}(x)$ with $a_E(X,\fa\fm^t)=0$. Let $L$ be the union of all such $E$. Then $L\cup\bar{C}$ is connected by the connectedness lemma \cite[Theorem 17.4]{Kl+92}. Hence $F\subset L$, that is, $a_F(X,\fa\fm^t)=0$, so $\ord_F\fm^t=a_F(X,\fa)\ge c$ (actually $=c$ by precise inversion of adjunction \cite{EMY03}). Lemma \ref{lem:lc} for $(X,\fb\fm^t)$ implies $a_G(X,\fb)\ge\ord_G\fm^t$. With $c_{\bar{X}}(G)\subset F$, we obtain $a_G(X,\fb)\ge\ord_G\fm^t\ge\ord_F\fm^t\ge c$.
\end{proof}

\begin{remark}
The case division in the proof of Lemma \ref{lem:centre} is in terms of the union $H$ of divisors $E$ with $\ord_E\fa>0$ and $c_X(E)\not\subset Z$, on a suitable log resolution $\bar{X}$. We write $H=H'+H''$ so that $H'$ is the union of those $E$ with $a_E(X,\fa)=0$. Then the cases (\ref{itm:notCD}), (\ref{itm:D}), (\ref{itm:C}) correspond to the conditions (\ref{itm:notCD}) $c_{\bar{X}}(G)\not\subset H$, (\ref{itm:D}) $\subset H''$ and $\not\subset H'$, (\ref{itm:C}) $\subset H'$ respectively. The proof of (\ref{itm:notCD}) works in any dimension, and (\ref{itm:D}) works as long as $(X,\fa)$ is plt (or more generally, dlt). However, (\ref{itm:C}) would not work unless $H'$ intersects only one divisor in $\varphi^{-1}(Z)$.
\end{remark}

\begin{remark}
In \cite{K10}, Conjecture \ref{cnj:mld} is formulated for $(X,\Delta,\fa)$ with $\fa$ an $\bR$-ideal sheaf as an equivalence class of formal products of ideal sheaves. Our proof is valid also for this formulation.
\end{remark}

{\small
\begin{acknowledgements}
This paper was generated in the discussions during the workshop at American Institute of Mathematics. I am grateful to Professor T. de Fernex for his suggestion of the connectedness lemma after increasing the boundary. I thank Mr Y. Nakamura for his interest in the surface case and Professor M. Musta\c{t}\u{a} for his conjecture. American Institute of Mathematics supported my participation financially. The research was partially supported by Grant-in-Aid for Young Scientists (A) 24684003.
\end{acknowledgements}
}

\end{document}